\newtheorem{thm}{Theorem}[section]
\newtheorem{lem}[thm]{Lemma}
\newtheorem{prop}[thm]{Proposition}
\newenvironment{pf*}[1]{\proof[#1]}{\endproof}
\newcommand{\h}{\hat}
\newcommand{\cal}[1]{{\mathcal #1}}
\theoremstyle{definition}
\theoremstyle{remark}
\newcommand{\dist}{\operatorname{dist}}
\newcommand{\tl}{\tilde}
\newcommand{\wtl}{\widetilde}
\newcommand{\eps}{\epsilon}
\newcommand{\aaa}[1]{{{\mathbf{#1}}}}
\renewcommand{\Im}{\operatorname{Im}}
\numberwithin{equation}{section}
\newcommand{\cO}{{\cal O}}
\newcommand{\cI}{{\cal I}}
\newcommand{\cU}{{\mathcal U}}
\newcommand{\cB}{{\aaa B}}
\newcommand{\bT}{{\mathbf T}}
\newcommand{\cC}{{\cal C}}
\newcommand{\cAC}{{\cal A\cal C}}
\newcommand{\cR}{{\cal R}}
\newcommand{\CC}{{\Bbb C}}
\renewcommand{\AA}{{\Bbb A}}
\newcommand{\RR}{{\Bbb R}}
\newcommand{\TT}{{\Bbb T}}
\newcommand{\ZZ}{{\Bbb Z}}
\newcommand{\NN}{{\Bbb N}}
\newcommand{\DD}{{\Bbb D}}
\newcommand{\QQ}{{\Bbb Q}}
\begin{document}
\addtolength{\evensidemargin}{-0.7in}
\addtolength{\oddsidemargin}{-0.7in}

\title[KAM-renormalization and Herman rings for 2D maps]{KAM-renormalization and Herman rings for 2D maps }
\author{Michael Yampolsky}
\date{\today}

\begin{abstract}
In this note, we extend the renormalization horseshoe we have recently constructed with N.~Goncharuk for analytic diffeomorphisms of the circle to their small two-dimensional perturbations. As one consequence, Herman rings with rotation numbers of bounded type survive on a codimension one set of parameters under small two-dimensional perturbations.
\end{abstract}

\maketitle

\section{Foreword}
In a recent paper \cite{GoYa},   N.~Goncharuk and the author  constructed a renormalization operator for complex-analytic maps of the annulus, which leaves rigid rotations invariant. This operator is complex analytic and real-symmetric with respect to the natural Banach space structure given by the sup-norm, and its differential is compact. Furthermore, the set of Brjuno rotations is a hyperbolic invariant set of this operator, with one-dimensional unstable bundle. This set has a stable foliation of codimension one, whose leaves consist of maps which are analytically conjugate to Brjuno rotations.

The construction of the renormalization operator in \cite{GoYa} parallels the cylinder renormalization operator introduced by the author in \cite{Ya3}. The hyperbolic horseshoe for cylinder renormalization consists of analytic critical circle maps. In another recent work \cite{GaYa}, D.~Gaidashev and the author extended a subset of this horseshoe, corresponding to the rotation numbers of bounded type, to two-dimensional complex-analytic maps. In the real slice of the corresponding Banach manifold, the leaves of the stable foliation have codimension one, and consist of real-analytic dissipative maps of the annulus with a critical circle attractor, the dynamics on which is homeomorphically but not smoothly conjugate to the rigid rotation.

Finally, in \cite{GaRaYa}, a similar set of ideas was used by D.~Gaidashev, R.~Radu, and the author to prove that Siegel disk boundaries of highly dissipative H{\'e}non maps with golden-mean semi-Siegel fixed points are topological circles; the corresponding renormalization picture was constructed in \cite{GaYa1}.

This note synthesizes the above ideas, and adds some new ones, to extend the renormalization picture of \cite{GoYa} restricted to rotation numbers of bounded type to two-dimensional complex-analytic  dissipative maps (Theorem~\ref{th:hyperb3}). The maps populating the stable leaves possess analytic invariant circles, the dynamics on which is analytically conjugate to the rotation (Theorem~\ref{th:rotation}). In particular, this implies that Herman rings with rotation numbers of bounded type survive on a codimension one subset of parameters under small two-dimensional perturbations.

This note is not supposed to be a self-contained introduction to the subject. Rather, we assume that the reader is either familiar with the above quoted works, or will look up the relevant statements there -- what we do here is connecting the dots.

\section{Function spaces and  1D renormalization}
\subsection{Commuting pairs}
Denote $U_r(z_0)=\{|z-z_0|<r\}$; for a set $S\subset \CC$ set $$U_r(S)=\underset{z\in S}{\cup} U_r(z).$$

Let $\cB$ denote Brjuno rotation numbers. For $M\in\NN$ let $\bT_M$ denote the irrational rotation numbers of type bounded by $M$ (that is, $M$ is an upper bound for the terms in the continued fraction expansion of the rotation number).

Let us set
$$T_\theta(z)=z+\theta,\text{ and }\alpha(z)=T_1(z)=z+1.$$
Let us fix a real-symmetric topological disk $W\Supset [0,1]$. We define $\cC_W$ to be the space of maps $\beta$ such that:
\begin{itemize}
\item $\beta$ is a bounded analytic map in $W$, continuous up to the boundary;
\item $\alpha\circ \beta=\beta\circ \alpha$ where defined.
\end{itemize}
Equipped with the uniform norm,  $\cC_W$ is a Banach manifold (since $\beta(z)-z$ is a $1$-periodic function).

We will refer to pairs $(\alpha,\beta)$ where $\beta\in\cC_W$ as {\it normalized commuting pairs}.

The connection with the usual definition of commuting pairs (which we will refer to as {\it non-normalized} commuting pairs, to avoid confusion) \cite{GaYa} is as follows. Let $\zeta=(\eta,\xi)$ be a commuting pair of analytic diffeomorphisms. The model case to consider is a map $F$ of an annulus around $\TT$ which is close to an irrational rotation $T_\theta$, and
$$\eta=F^{q_n}\text{, and }\xi=F^{q_{n+1}}.$$
Now, let $\Psi$ be a locally conformal  solution (cf. \cite{GoYa}) of  the functional equation
$$\Psi^{-1}\circ \eta\circ \Psi=\alpha$$
and set $\beta=\Psi^{-1}\circ\xi\circ \Psi.$ Such a solution exists and can furthermore be chosen to analytically depend on $\eta$ (see  \cite{GoYa} for the construction, and also \cite{GaiYam} for a constructive version), associating a normalized commuting pair to a non-normalized one.

Renormalization $\cR$ on normalized commuting pairs is simply the cylinder renormalization defined in \cite{GoYa}, 
since $B$ naturally projects to a map of the quotient $W/_{z\sim z+1}$ and vice versa.
We thus have the following Renormalization Hyperbolicity Theorem \cite{GoYa}:
\begin{thm}
  \label{th:hyperb1}
  There exists a domain $W$ such that the following holds. The operator $\cR$ maps an open subset $\cU$ of $\cC_W$ to a pre-compact subset of $\cC_W$. The invariant set $\{T_\theta\;|\;\theta\in\cB\}\subset \cC_W$ is hyperbolic, with one-dimensional unstable direction. It has a codimension one strong stable foliation by analytic submanifolds, and every $\beta\in\cC_W\cap W^s(T_\theta)$ with $\theta\in\cB$
  is analytically conjugate to $T_\theta$ in a neighborhood of $[0,1]$. Furthermore, for every $M\in\NN$, hyperbolicity is uniform on the set $\{T_\theta\;|\;\theta\in\bT_M\}$.
%
\end{thm}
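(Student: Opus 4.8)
The plan is to import essentially verbatim the hyperbolicity picture established in \cite{GoYa} for the cylinder renormalization operator and to translate it into the language of normalized commuting pairs via the correspondence $\beta\leftrightarrow B$ described above. First I would fix the domain $W$ to be the one produced in \cite{GoYa}: there it is shown that for a suitable real-symmetric topological disk (a neighborhood of $[0,1]$ that, after passing to the quotient $W/_{z\sim z+1}$, becomes an annular neighborhood of $\TT$ on which the construction converges), the cylinder renormalization $\cR$ is defined, complex analytic, real-symmetric, and has compact differential on an open set. Since $\beta(z)-z$ is $1$-periodic, a map $\beta\in\cC_W$ projects to an analytic self-map $B$ of the cylinder $W/_{z\sim z+1}$, and this projection is a biholomorphism of Banach manifolds intertwining $\cR$ on normalized commuting pairs with the cylinder renormalization of \cite{GoYa}; precompactness of the image is inherited directly. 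The rigid rotations $T_\theta$ correspond to the rigid rotations of the cylinder, so the invariant set $\{T_\theta : \theta\in\cB\}$ is exactly the invariant set of \cite{GoYa}.

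Next I would transport the dynamical conclusions. Hyperbolicity with one-dimensional unstable direction, and the existence of a codimension-one strong stable foliation by analytic submanifolds, are statements about the derivative cocycle of $\cR$ along the invariant set together with an invariant-manifold/graph-transform argument; all of these are coordinate-free and pass through the intertwining biholomorphism unchanged. For the rigidity statement — that every $\beta\in\cC_W\cap W^s(T_\theta)$ with $\theta\in\cB$ is analytically conjugate to $T_\theta$ near $[0,1]$ — I would again quote \cite{GoYa}: there one shows that a point on the stable leaf has renormalizations converging geometrically to $T_\theta$, and that geometric convergence of cylinder renormalizations forces the original map to be analytically linearizable (this is where the Brjuno condition, encoded in $\cB$, is used to control the a~priori bounds and the pull-back of the linearizing coordinate). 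Unwinding the normalization $\Psi$ that relates $(\eta,\xi)$ to $(\alpha,\beta)$ then gives an analytic conjugacy on a neighborhood of $[0,1]$ in the original coordinate.

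Finally, for the uniformity claim over $\bT_M$, I would invoke the fact from \cite{GoYa,Ya3} that on rotation numbers of type bounded by $M$ one has uniform a~priori bounds — the renormalizations of any $\beta$ in a suitable neighborhood of $\{T_\theta:\theta\in\bT_M\}$ stay in a fixed compact family — so the hyperbolicity constants (expansion rate on the unstable direction, contraction rate on stable leaves, angles between the bundles) can be taken independent of $\theta\in\bT_M$. Concretely this follows because $\bT_M$ is a compact subset of the parameter circle on which the combinatorics is bounded, and the spectral data of $D\cR$ vary continuously; compactness upgrades pointwise hyperbolicity to uniform hyperbolicity.

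I expect the only real work here is bookkeeping: checking that the biholomorphism $\beta\mapsto B$ between $\cC_W$ and the cylinder function space is genuinely an isomorphism of Banach manifolds carrying $\cR$ to $\cR_{\mathrm{cyl}}$ (continuity of $\Psi$ in $\eta$, the $1$-periodicity bookkeeping, and matching the domains), after which every assertion in the theorem is a direct quotation from \cite{GoYa}. There is no genuinely new obstacle at this stage of the paper; the substantive difficulties are deferred to the two-dimensional extension in later sections.
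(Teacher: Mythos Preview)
Your proposal is correct and matches the paper's own treatment: the paper does not give an independent proof of this theorem but simply observes that renormalization on normalized commuting pairs \emph{is} the cylinder renormalization of \cite{GoYa} via the projection $\beta\mapsto B$ to $W/_{z\sim z+1}$, and then quotes the result. Your write-up is a more detailed version of exactly this identification, so there is nothing to correct.
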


\subsection{Almost commuting pairs}
We say that $(\alpha,\beta)$ is {\it a  normalized almost commuting pair} if
\begin{itemize}
\item $\beta$ is a bounded analytic map in $W$, continuous up to the boundary;
\item $[\alpha,\beta](z)\equiv \alpha\circ \beta(z)-\beta\circ \alpha(z)=o(z^2)$.
\end{itemize}
We denote the space of such maps $\beta$ by $\cAC_W$; as seen in \cite{GaYa} it is a Banach submanifold of the space of bounded analytic functions in $W$, continuous up to the boundary.
The definition of $\cR$ is naturally extended to these pairs; the image of an almost commuting pair is again an almost commuting pair.

Just as before, these pairs correspond to  conformal rescalings of {\it non-normalized} almost commuting pairs \cite{GaYa,GaRaYa} $\zeta=(\eta,\xi)$ with 
$$[\zeta](z)=\eta\circ \xi(z)-\xi\circ\eta(z)=o(z^2).$$

It turns out that almost commutativity improves under renormalization:
\begin{thm}
  \label{th:commutator1}
  Fix  $\tau\in(0,1)$, and let $M\in\NN$. There exist $\delta>0$, and $\ell\in\NN$ such that for every $\theta \in\cB$, there exists $\eps=\eps(\theta)$
    such that the following is true. 
  Let $\beta\in\cAC_W$ be $\eps$-close to $T_\theta$, and set $\nu=(\alpha,\beta)$. Then 
  \begin{equation}
    \label{commutator-estimate1}
    ||[\cR^\ell \nu]||^\infty_{U_\delta(0)}<\tau||[\nu]||^\infty_{U_\delta(0)}.
    \end{equation}
Moreover, suppose $\theta\in\bT_M$. Then $\eps$ can be chosen depending only on $M$. 

\end{thm}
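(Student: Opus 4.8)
The plan is to show that the commutator $[\nu] = \alpha\circ\beta - \beta\circ\alpha$ behaves like a section of the ``unstable-to-commutator'' cocycle along the renormalization orbit, and that this cocycle contracts geometrically near the rotation fiber. First I would recall from \cite{GaYa} that $\cAC_W$ fibers over $\cC_W$ with the commutator $[\nu]$ playing the role of a (vector-valued) transverse coordinate: the map $\nu\mapsto[\nu]$ is analytic, vanishes exactly on $\cC_W$, and its derivative along the fiber at a commuting pair is an isomorphism onto the relevant jet space (the space of germs $o(z^2)$, which after the normalization is effectively finite-dimensional in the relevant jets). Thus, writing $\cR$ in these coordinates near $T_\theta$, the commutator transforms by a linear operator $L_\theta = D_{\text{fib}}\cR(T_\theta)$ up to a higher-order error controlled by $\eps$ and by $\|[\nu]\|$ itself. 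The key analytic input is that $\cR$ is complex-analytic with compact derivative (Theorem~\ref{th:hyperb1} and \cite{GoYa}), so $L_\theta$ is a compact operator depending continuously on $\theta$.

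Next I would establish the \emph{uniform contraction} of the cocycle $L_{\cR^{n-1}\theta}\circ\cdots\circ L_\theta$ along orbits in $\cB$. The point is that $T_\theta$ lies on the hyperbolic invariant set with one-dimensional unstable direction, and the commutator direction is \emph{transverse} to both the unstable direction and the neutral/parameter direction; it lives inside the strong stable bundle (this is exactly why almost commutativity can only improve, not create a new instability). So $L_\theta$ restricted to the commutator subspace has spectral radius bounded away from $1$; by compactness plus continuity in $\theta$ and the (pre)compactness of $\overline{\cB}$ — or, in the bounded-type case, the genuine compactness of $\{T_\theta : \theta\in\bT_M\}$ from the last sentence of Theorem~\ref{th:hyperb1} — one gets a uniform $\ell$ and $\tau' < \tau$ with $\|L_{\cR^{\ell-1}\theta}\cdots L_\theta\| < \tau'$ on the commutator subspace, for all $\theta$ (resp. all $\theta\in\bT_M$). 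Choosing the domain $U_\delta(0)$ small enough that the $o(z^2)$ jets dominate the sup-norm on that disk is where $\delta$ enters; this is a routine Cauchy-estimate argument once $\ell$ is fixed.

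The final step is to absorb the nonlinear error. For $\beta$ within $\eps$ of $T_\theta$, each application of $\cR$ keeps the orbit within a controlled neighborhood of the orbit of $T_\theta$ (using the uniform hyperbolicity and the precompactness of $\cR(\cU)$), so over $\ell$ steps the deviation of $\cR^{\ell}\nu$ in the commutator coordinate from $L_{\cR^{\ell-1}\theta}\cdots L_\theta[\nu]$ is bounded by $C(\eps + \|[\nu]\|)\cdot\|[\nu]\|$. Shrinking $\eps$ so that $\tau' + C\eps + C\cdot(\text{diam of }\cU) < \tau$ yields \eqref{commutator-estimate1}. In the bounded-type case, because the whole construction is uniform over $\bT_M$ — the $\ell$, the $\tau'$, the constant $C$, and the size of the admissible neighborhood all depend only on $M$ — the threshold $\eps$ can be taken to depend only on $M$, as claimed. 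The main obstacle I anticipate is the second step: pinning down that the commutator subspace really is contained in (a complement inside) the strong stable bundle uniformly, i.e.\ that no eigenvalue of $L_\theta$ on this subspace approaches the unit circle as $\theta$ ranges over $\cB$; away from bounded type this requires the precompactness of $\cR(\cU)$ together with a careful limiting argument at parabolic/boundary rotation numbers, rather than a direct spectral bound, and this is precisely why the clean uniform statement is only asserted for $\theta\in\bT_M$.
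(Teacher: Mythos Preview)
Your approach has a circularity problem. You want to place the commutator subspace inside the strong stable bundle of $\cR$ acting on $\cAC_W$, but in the paper that hyperbolicity statement (Theorem~\ref{th:hyperb2}) is deduced \emph{from} Theorem~\ref{th:commutator1}, not the other way around. The only hyperbolicity available to you at this point is Theorem~\ref{th:hyperb1}, which concerns $D\cR$ restricted to the tangent space of $\cC_W$; it says nothing about the normal directions in $\cAC_W$, and those normal directions are precisely where the commutator lives. You flag this yourself as ``the main obstacle,'' but the proposal does not resolve it: compactness of $D\cR$ and precompactness of $\cR(\cU)$ tell you the spectrum is discrete with $0$ as the only accumulation point, not that no eigenvalue in the commutator direction sits on or outside the unit circle. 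So the sentence ``$L_\theta$ restricted to the commutator subspace has spectral radius bounded away from $1$'' is exactly the content of the theorem, asserted rather than proved.

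The paper's argument is much more concrete and sidesteps spectral theory entirely. Working with non-normalized pairs $\zeta=(\eta,\xi)$, an easy induction (Lemma~\ref{lem-comm}) shows that $\eta_\ell\circ\xi_\ell=f_\ell\circ\eta\circ\xi$ and $\xi_\ell\circ\eta_\ell=f_\ell\circ\xi\circ\eta$ for the \emph{same} composition $f_\ell$ of iterates of $\eta,\xi$. Hence $[p\cR^\ell\zeta]=f_\ell\circ\eta\circ\xi-f_\ell\circ\xi\circ\eta$, which to first order is $f_\ell'(\eta\circ\xi(0))\cdot[\zeta]$. For $\zeta$ close to linear the derivative $|f_\ell'|$ stays uniformly bounded, while passing from $p\cR^\ell$ to $\cR^\ell$ introduces the rescaling factor $\lambda_\ell=|\xi_\ell(0)|\to 0$; this immediately yields $|[\cR^\ell\zeta](z)|\lesssim \lambda_\ell |c||z|^2<\tau|z|^2$ on $U_\delta(0)$ once $\ell$ is large enough. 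The contraction thus comes from the explicit rescaling in the renormalization microscope, not from an abstract stable-bundle argument, and this is what then \emph{feeds into} the proof of Theorem~\ref{th:hyperb2}.
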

It is easier to prove the estimate for non-rescaled pairs $\zeta=(\eta,\xi)$. 
We will need the following lemma:
\begin{lem}
\label{lem-comm}
Let $\zeta=(\eta,\xi)$, and denote $p\cR^\ell\zeta=(\eta_\ell,\xi_\ell)$ the $\ell$-th pre-renormalization of $\zeta$. Then,
\begin{equation}
  \label{eq-comm-comp}
  \eta_\ell\circ\xi_\ell=f_\ell\circ\eta\circ\xi\text{ and }\xi_\ell\circ\eta_\ell=f_\ell\circ\xi\circ\eta,
  \end{equation}
where $f_\ell$ is  the same composition of iterates of $\eta$ and $\xi$. In particular, the commutator $[p\cR^\ell\zeta]$ has the form
\begin{equation}
  \label{eq-comm}
  [p\cR^\ell\zeta]=f_\ell\circ\eta\circ\xi-f_\ell\circ\xi\circ\eta.
\end{equation}

\end{lem}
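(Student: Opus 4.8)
The plan is a straightforward induction on the number $\ell$ of pre-renormalization steps; the content is entirely combinatorial. The base case $\ell=0$ is immediate: $p\cR^{0}\zeta=\zeta$, so $\eta_{0}=\eta$, $\xi_{0}=\xi$, and \eqref{eq-comm-comp} holds with $f_{0}=\mathrm{id}$.

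For the inductive step I would first record the shape of a single pre-renormalization step, as given by the definition in \cite{GaYa}: it keeps one of the two maps of the pair and replaces the other by precomposing it \emph{on the left} with an iterate of the kept one, namely
\[
(\eta',\xi')\longmapsto\bigl(\eta',\,(\eta')^{a}\circ\xi'\bigr)\qquad\text{or}\qquad(\eta',\xi')\longmapsto\bigl((\xi')^{a}\circ\eta',\,\xi'\bigr),
\]
where $a$ is the appropriate continued-fraction coefficient of the rotation number of $\zeta$ (which alternative occurs, and the value of $a$, being dictated by that rotation number). Writing $w\in\{\eta',\xi'\}$ for the coordinate that is kept and $(\eta'',\xi'')$ for the new pair, one multiplies out and reads off
\[
\eta''\circ\xi''=w^{a}\circ(\eta'\circ\xi')\qquad\text{and}\qquad\xi''\circ\eta''=w^{a}\circ(\xi'\circ\eta').
\]
In words: both ``returns'' $\eta'\circ\xi'$ and $\xi'\circ\eta'$ are recovered from those of the new pair by left-composition with \emph{the same} word $w^{a}$. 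I would stress that this is an identity of compositions of maps; it uses no commutativity, hence holds for an arbitrary pair (in particular for a merely almost commuting $\zeta$), which is exactly what makes the lemma usable in the proof of \thmref{th:commutator1}.

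Granting this, the induction closes at once. Assume \eqref{eq-comm-comp} at level $\ell$ with $f_{\ell}$ a composition of iterates of $\eta$ and $\xi$. Apply one step to $(\eta_{\ell},\xi_{\ell})$ and let $w_{\ell}\in\{\eta_{\ell},\xi_{\ell}\}$ be the unchanged coordinate; the displayed identities with $(\eta',\xi')=(\eta_{\ell},\xi_{\ell})$, together with the inductive hypothesis, give
\[
\eta_{\ell+1}\circ\xi_{\ell+1}=w_{\ell}^{a}\circ(\eta_{\ell}\circ\xi_{\ell})=w_{\ell}^{a}\circ f_{\ell}\circ\eta\circ\xi,\qquad\xi_{\ell+1}\circ\eta_{\ell+1}=w_{\ell}^{a}\circ(\xi_{\ell}\circ\eta_{\ell})=w_{\ell}^{a}\circ f_{\ell}\circ\xi\circ\eta,
\]
so \eqref{eq-comm-comp} holds at level $\ell+1$ with $f_{\ell+1}:=w_{\ell}^{a}\circ f_{\ell}$; as $\eta_{\ell},\xi_{\ell}$ are themselves compositions of iterates of $\eta,\xi$ (another one-line induction), so is $f_{\ell+1}$. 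Subtracting the two identities in \eqref{eq-comm-comp} yields \eqref{eq-comm}.

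The one genuinely delicate point is the shape of a single pre-renormalization step — in particular that the iterate of the kept coordinate lands on the left, so that $\eta'\circ\xi'$ (and not some rearrangement of it) is literally what remains at the bottom; this is where the precise normalization of commuting pairs in \cite{GaYa} must be invoked, and it is the only part that is more than bookkeeping. A minor routine point is that all the compositions above are a priori defined only on suitable nested subintervals; that they make sense on the intervals relevant to the commutator estimate is standard in the commuting-pair formalism, and in the rescaled picture of \thmref{th:hyperb1} becomes a statement about maps on a fixed neighbourhood of $[0,1]$. The payoff, used immediately afterwards, is that \eqref{eq-comm} exhibits $[p\cR^{\ell}\zeta]$ as the difference of the values of the single deep iterate $f_{\ell}$ at the two points $(\eta\circ\xi)(z)$ and $(\xi\circ\eta)(z)$, which differ by $[\zeta](z)=o(z^{2})$; since $f_{\ell}$ strongly contracts at the relevant scale, the commutator estimate \eqref{commutator-estimate1} follows.
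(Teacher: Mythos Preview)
Your proposal is correct and follows essentially the same approach as the paper: an induction on $\ell$ that uses only the combinatorial shape of one pre-renormalization step, namely that the new pair is obtained by left-composing one coordinate with an iterate of the other, so that $\eta_{\ell+1}\circ\xi_{\ell+1}$ and $\xi_{\ell+1}\circ\eta_{\ell+1}$ both factor as the same word applied to $\eta_\ell\circ\xi_\ell$ and $\xi_\ell\circ\eta_\ell$ respectively. The paper writes the step in the swapped convention $(\eta_{\ell+1},\xi_{\ell+1})=(\xi_\ell,\xi_\ell^{r}\circ\eta_\ell)$ and sets $f_{\ell+1}=\xi_\ell^{r}\circ f_\ell$, whereas you phrase it without the swap, but the computation and the conclusion are the same.
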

\begin{proof}
  The proof of (\ref{eq-comm-comp}) is easily supplied by induction. Indeed, if
  we assume that it holds for some $\ell$, then
  $$\xi_{\ell+1}\circ\eta_{\ell+1}=\xi^r_\ell\circ\eta_\ell\circ\xi_\ell=\xi^r_\ell\circ f_\ell\circ \eta\circ\xi,\text{ and }
  \eta_{\ell+1}\circ\xi_{\ell+1}=\xi_\ell\circ\xi^r_\ell\circ\eta_\ell=\xi^r_\ell\circ f_\ell\circ \xi\circ\eta,$$
  so we get the desired claim by setting
  $$f_{\ell+1}=-\xi_\ell^r\circ f_\ell.$$
  \end{proof}

Set $\lambda_\ell=|\xi_\ell(0)|$.
  In view of Lemma~\ref{lem-comm}, we have the following first-order estimate:
  $$[p\cR^\ell\zeta]=\eta_\ell\circ\xi_\ell-\xi_\ell\circ\eta_\ell\sim f_\ell'(\eta\circ\xi(0))[\zeta]\sim f_\ell'(\eta\circ\xi(0))\cdot cz^2.$$
  Assuming the pair is sufficiently close to linear, the derivative $f_\ell'$ is uniformly bounded. 
  Thus,
  $$|[\cR^\ell\zeta](z)|\sim \lambda_\ell |f_\ell'(\eta\circ\xi(0))|\cdot |c||z|^2\lesssim\tau |z|^2.$$
  for $\ell$ large enough and $\zeta$ sufficiently close to linear.
The statement for normalized almost commuting pairs clearly follows.

  As a corollary of Theorem~\ref{th:commutator1}, the operator $\cR$ in $\cAC_W$ does not have any new unstable directions:
  
\begin{thm}
  \label{th:hyperb2}
There exists a domain $W$ such that the following holds. The operator $\cR$ maps an open subset $\wtl\cU$ of $\cAC_W$ to a compact subset of $\cAC_W$. Its differential at every point is a compact linear operator. Fix $M\in\NN$. The invariant set $\{T_\theta\;|\;\theta\in\bT_M\}\subset \cAC_W$  is uniformly hyperbolic, with one-dimensional unstable direction. It has a codimension one strong stable foliation by analytic submanifolds.
  \end{thm}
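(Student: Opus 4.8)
The plan is to bootstrap the statement from \thmref{th:hyperb1} and \thmref{th:commutator1}. The structural point is that $\cC_W$ sits inside $\cAC_W$ as an $\cR$-invariant Banach submanifold on which renormalization is already understood, and that \thmref{th:commutator1} says precisely that the directions of $\cAC_W$ transverse to $\cC_W$ --- those detected by the commutator $[\alpha,\beta]$ --- are uniformly contracted by the iterate $\cR^{\ell}$. Hence no new unstable directions can appear, and the old hyperbolic picture merely acquires extra strongly stable directions.

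\emph{Compactness.} This is obtained exactly as in \cite{GoYa,GaYa}: $\cR\nu$ arises from $\nu$ by a bounded number of compositions, a conformal reparametrization and an affine rescaling, all of which can be performed on a fixed domain $W^{+}$ compactly containing $W$ once $\nu$ lies in a small enough neighbourhood $\wtl\cU$ of the rotations, and $\nu\mapsto\cR\nu$ is analytic and uniformly bounded as a map into $\cAC_{W^{+}}$. Factoring $\cR=\iota\circ F$ with $F\colon\wtl\cU\to\cAC_{W^{+}}$ analytic and $\iota\colon\cAC_{W^{+}}\hookrightarrow\cAC_W$ the restriction map, which is compact by Montel's theorem, shows that $\cR(\wtl\cU)$ is precompact and that $D\cR=\iota\circ DF$ is compact at every point; in particular the spectrum of $D\cR$ anywhere is a discrete subset of $\CC\setminus\{0\}$, and the phrase ``one-dimensional unstable direction'' is meaningful.

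\emph{Hyperbolicity at the rotations.} Since $\alpha$ is affine, the commutator $\beta\mapsto[\alpha,\beta]$ is an affine function of $\beta$ whose linear part is the difference operator $L\phi(z)=\phi(z)-\phi(z+1)$; accordingly the defining conditions of $\cAC_W$ and of $\cC_W$ are linear in $\phi=\beta-\mathrm{id}$, these manifolds are (locally) affine subspaces, and at every rotation $T_\theta$ the common tangent space is the fixed space $V=\{\phi:L\phi=o(z^{2})\}$, with $T_{T_\theta}\cC_W=V_0:=\ker L$. Fix $M$, fix $\tau\in(0,1)$ to be chosen small, and take $\ell,\delta$ as in \thmref{th:commutator1}; write $G$ for the Gauss shift, so that $\cR(T_\theta)=T_{G\theta}$ and $G$ preserves $\bT_M$. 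As $\cC_W$ is $\cR$-invariant, $D\cR(T_\theta)$ carries $V_0$ into $V_0$, hence, with respect to a splitting $V=V_0\oplus N$, it is block upper triangular, $\begin{pmatrix}A_\theta&B_\theta\\0&C_\theta\end{pmatrix}$, with $\{A_\theta\}$ the linearized cocycle of \thmref{th:hyperb1}, uniformly hyperbolic over $\bT_M$ with a one-dimensional unstable bundle. Linearizing \eqref{commutator-estimate1} at $T_\theta$, where the commutator vanishes, gives $\|L\,D\cR^{\ell}(T_\theta)\phi\|^{\infty}_{U_\delta(0)}\le\tau\,\|L\phi\|^{\infty}_{U_\delta(0)}$ for every $\phi\in V$; together with the input (to be established) that $L$ is bounded below on $N$ with an $M$-uniform constant, this forces the $(2,2)$ block of $D\cR^{\ell}(T_\theta)$, namely $C_{G^{\ell-1}\theta}\circ\cdots\circ C_\theta$, to have norm at most $\kappa(M)\,\tau$, which is $<1$ for $\tau$ small. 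It is then standard that such a block upper triangular cocycle is itself uniformly hyperbolic with a one-dimensional unstable bundle: the unstable line of $\{A_\theta\}$ already lies in the $D\cR$-invariant subspace $V_0$ and stays expanding for the full cocycle, while a complementary invariant contracting bundle is furnished by the usual graph transform, using the triangular form together with the contraction in $N$. This yields the claimed uniform hyperbolicity, with one-dimensional unstable direction, of $\{T_\theta:\theta\in\bT_M\}$ inside $\cAC_W$.

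\emph{Invariant manifolds, and the main obstacle.} With the uniform hyperbolic splitting in place and $\cR$ compact and real-analytic, the stable/unstable manifold theorem over the compact hyperbolic invariant set $\{T_\theta:\theta\in\bT_M\}$, applied as in \cite{GoYa}, produces the codimension-one strong stable foliation by analytic submanifolds, whose leaf through $T_\theta$ meets $\cC_W$ in the corresponding leaf of \thmref{th:hyperb1}. The one genuinely new ingredient --- and the expected main obstacle --- is the input used above: that the commutator $[\alpha,\beta]$, measured as in \thmref{th:commutator1}, is comparable, with constants depending only on $M$, to the distance of $\beta$ from $\cC_W$ inside $\cAC_W$, so that the nonlinear contraction \eqref{commutator-estimate1} truly controls the transverse derivative. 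This amounts to a quantitative solvability statement for the cohomological equation $h(z)-h(z+1)=[\alpha,\beta](z)$ with $W$-uniform bounds, in the spirit of the estimates in \cite{GaYa,GaiYam}. Once this is granted, the remainder is soft: the block-triangular structure guarantees that the new, contracting normal directions can neither create unstable directions nor push the spectrum onto the unit circle, regardless of the mixing term $B_\theta$, as soon as $\tau$ is chosen small enough.
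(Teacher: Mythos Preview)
Your argument is structurally sound and reaches the right conclusion, but it takes a genuinely different route from the paper's proof. You work linearly: split the tangent space of $\cAC_W$ at $T_\theta$ as $V_0\oplus N$ with $V_0$ tangent to $\cC_W$, observe that the linearized cocycle is block upper triangular, and use \thmref{th:commutator1} to force contraction in the $N$-block; standard graph-transform arguments then upgrade the one-dimensional hyperbolicity from $\cC_W$ to $\cAC_W$. The paper instead works nonlinearly and geometrically: given $\beta\in\cAC_W$, it passes to the quotient circle, obtains a quasiregular map with dilatation of size $\|[\alpha,\beta]\|$, and applies the Measurable Riemann Mapping Theorem to produce an honest commuting $\tilde\beta\in\cC_W$ with $\|\tilde\beta-\beta\|$ comparable to $\|[\alpha,\beta]\|$. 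Since \thmref{th:commutator1} shrinks the commutator, $\cR$ contracts the distance to $\cC_W$, and \thmref{th:hyperb1} finishes.

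The practical difference is exactly at the point you flag as ``the expected main obstacle'': the comparability of $\|[\alpha,\beta]\|$ with $\dist(\beta,\cC_W)$. You phrase this as bounded solvability of the cohomological equation $h(z)-h(z+1)=[\alpha,\beta](z)$ on $W$, which is delicate because $W$ is a bounded simply connected domain rather than a strip or the full circle. The paper sidesteps this entirely: the MRMT construction on the quotient $W/_{z\sim z+1}$ \emph{is} the required projection, and the comparability comes for free from the dependence of the Beltrami solution on the dilatation. So your proof is complete modulo an input that the paper supplies by a different (and arguably more direct) mechanism; conversely, your block-triangular presentation makes the ``no new unstable directions'' conclusion more transparent than the paper's terse appeal to \thmref{th:hyperb1}.
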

\begin{proof}
  The map $\beta\in\cAC_W$ projects from the interval $[0,1]$ to a smooth map of the unit circle which is analytic at every point except one. 
When the commutator $[\alpha,\beta]$ is small, this map can be extended to a quasiregular map of a neighborhood with a small dilatation. Applying Measurable Riemann Mapping Theorem produces a conformal map of an annulus around the circle, whose lift $\tl \beta$ is close to $\beta$: the norm of the distance is comparable to the norm of $[\alpha,\beta]$. Since $\tl \beta$ commutes with $\alpha$, Theorem~\ref{th:commutator1} implies that the operator $\cR$ contracts the distance to $\cC_W$.
The statement follows by Theorem~\ref{th:hyperb1}.
\end{proof}

\section{Definition of 2D renormalization}
\subsection{Preliminaries: multi-indices of renormalization}
We follow the notation of \cite{GaRaYa}. Namely, let
us consider the space $\cI$ of multi-indices $\bar s=(a_1,b_1,a_2,b_2,\ldots,a_m,b_m)$ where $a_j\in \NN$ for $2\leq m$, $a_1\in\NN\cup\{0\}$,
$b_j\in\NN$ for $1\leq j\leq m-1$, and $b_m\in\NN\cup\{0\}$. 


For a pair of maps $\zeta=(\eta,\xi)$ and $\bar s $ as above we will denote 
$$\zeta^{\bar s}\equiv\xi^{b_m}\circ\eta^{a_m}\circ\cdots\circ\xi^{b_2}\circ\eta^{a_2}\circ\xi^{b_1}\circ \eta^{a_1}.$$
Similarly, 
$$\zeta^{-\bar s}\equiv (\zeta^{\bar s})^{-1}=(\eta^{a_1})^{-1}\circ(\xi^{b_1})^{-1}\circ\cdots\circ(\eta^{a_m})^{-1}\circ (\xi^{b_m})^{-1}.$$


Consider the $n$-th pre-renormalization of $\zeta$:
$$p\cR^n\zeta=\zeta_{n}=(\eta_{n}|_{Z_n},\xi_{n}|_{W_n}),$$
where $Z_{n} = {\mathbf l}_{n}(Z)$, $W_{n} = {\mathbf l}_{n}(W)$, and 
\begin{equation}\label{alpha_n}
{\mathbf l}_n(z)=\eta_n(0) z.
\end{equation}

We define  $\bar s_n,\bar t_n\in\cI$ to be such that
$$\eta_{n}=\zeta^{\bar s_n},\text{ and }\xi_{n}=\zeta^{\bar t_n}.$$
A straightforward induction shows:
\begin{lem}
\label{multi1}
For $n\geq 1$, let $\bar r=\bar s_n$ or $\bar t_n$. Write $\bar r=(a_1,b_1,a_2,b_2,\ldots,a_{m_n},b_{m_n})$. Then $b_{m_n}=0$, and either
$$a_{m_n}\geq 2$$
or
$$a_{m_n}=b_{m_n-1}=1.$$
Furthermore, if $\bar s_n$ ends in $\ldots,1,1,0$ then so does $\bar t_n$.
\end{lem}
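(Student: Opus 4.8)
The plan is to argue by induction on $n$, feeding in the recursive description of the pre-renormalization. Recall that, up to the affine rescaling $\mathbf l_{n+1}$, the pair $p\cR^{n+1}\zeta$ is obtained from $p\cR^{n}\zeta=(\eta_n,\xi_n)$ by a fixed composition recipe in which one new component is (essentially) $\eta_n$ and the other is a word of the form $\eta_n^{a}\circ\xi_n$, where $a\ge 1$ is the relevant partial quotient. In multi-index terms this says $\bar t_{n+1}=\bar s_n$, while $\bar s_{n+1}$ is obtained by juxtaposing $\bar t_n$ and $a$ successive copies of $\bar s_n$ (with $\bar s_n$ outermost), subject to the reduction rules that two adjacent blocks of the same type (two $\eta$-powers, or two $\xi$-powers) merge by adding exponents and that a leading $\eta^{0}$ or trailing $\xi^{0}$ is discarded. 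A preliminary point to verify is that these rules send admissible multi-indices in $\cI$ to admissible multi-indices: this is exactly what the merging achieves, since it prevents any interior entry from being $0$.

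For the base of the induction I would compute $\bar s_1$ and $\bar t_1$ directly from the recipe: $\xi_1=\eta$ gives $\bar t_1=(1,0)$, and $\eta_1=\eta^{a_1}\circ\xi$ gives $\bar s_1=(0,1,a_1,0)$. Both end in $0$; the second-to-last entry of $\bar s_1$ is $a_1$, which is either $\ge 2$ or equals $1$ with the preceding entry $b_1=1$, so the required dichotomy holds, as does the (here essentially vacuous) assertion about endings $\ldots,1,1,0$. One records $n=2$ as well if it helps make the pattern transparent.

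For the inductive step, assume the claim for $n\ge 1$. Since $\bar t_{n+1}=\bar s_n$, the claim for $\bar t_{n+1}$ is immediate from the hypothesis for $\bar s_n$. For $\bar s_{n+1}$, observe that in the concatenation the \emph{outermost} copy of $\bar s_n$ sits at the very end and has nothing merged into it from the outer side; if $m_n\ge 2$, then the block immediately preceding its final $\eta$-power is a genuine $\xi$-power (interior $b$-entries are nonzero), so no merge occurs there either, and the tail $(\ldots,b_{m_n-1},a_{m_n},0)$ of $\bar s_n$ is reproduced verbatim as the tail of $\bar s_{n+1}$; if $m_n=1$, then $\bar s_n=(a_{m_n},0)$ with $a_{m_n}\ge 2$ and the final exponent of $\bar s_{n+1}$ can only grow, so it stays $\ge 2$. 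Either way $\bar s_{n+1}$ ends in $0$ with second-to-last entry $\ge 2$, or equal to $1$ with its predecessor equal to $1$. Finally, $\bar s_{n+1}$ has the same tail as $\bar s_n$, and $\bar t_{n+1}=\bar s_n$, so the tails of $\bar s_{n+1}$ and $\bar t_{n+1}$ coincide; in particular one ends in $\ldots,1,1,0$ exactly when the other does, which is (more than) the last assertion.

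The only thing requiring care — and the reason the text can call this "straightforward" rather than "trivial" — is the junction bookkeeping in the concatenation: tracking precisely which adjacent blocks coalesce so that admissibility is preserved, and isolating the degenerate configuration $\ldots\circ\eta^{1}\circ\xi^{1}$ at the outer end, which is exactly the source of the second alternative $a_{m_n}=b_{m_n-1}=1$ (as opposed to $a_{m_n}\ge 2$). There is no analytic or dynamical content here; it is a finite combinatorial verification that the tail of the composition word is stable under the renormalization recipe.
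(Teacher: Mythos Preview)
Your approach---induction on $n$, tracking how the tail of the multi-index propagates under the recursion $\bar t_{n+1}=\bar s_n$, $\eta_{n+1}=\eta_n^{a}\circ\xi_n$---is exactly what the paper means by ``A straightforward induction shows''; the paper supplies no further detail, and your inductive step (the outermost copy of $\bar s_n$ sits unmerged at the tail of $\bar s_{n+1}$, so both $\bar s_{n+1}$ and $\bar t_{n+1}=\bar s_n$ inherit the tail of $\bar s_n$) is correct.

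One point to tighten: in the base case you verify the dichotomy only for $\bar s_1$, but $\bar t_1=(1,0)$ has $m_1=1$, $a_{m_1}=1$ and no $b_{m_1-1}$, so neither alternative literally applies; and if the first partial quotient equals $1$ then $\bar s_1$ ends in $\ldots,1,1,0$ while $\bar t_1=(1,0)$ does not, so the ``furthermore'' clause also fails at $n=1$. This boundary effect is not addressed in the paper either and is irrelevant for the application (only large $n$ is used downstream), but your write-up should either start the induction at $n=2$ or flag the degenerate case explicitly rather than calling it ``essentially vacuous.''
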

Let $\bar s_n$ be given by $(a_1,b_1,a_2,b_2,\ldots,a_{m_n},0)$.
We denote 
\begin{eqnarray}
\nonumber \hat s_n&=&\left\{ (a_1,b_1,a_2,b_2,\ldots,a_{m_n}-2,0), \ a_{m_n} \ge 2  \atop (a_1,b_1,a_2,b_2,\ldots,0,0,0), \ a_{m_n}=1 \right. ,\\
\nonumber \phi_0(x)&=&\left\{ \eta^2, \ a_{m_n} \ge 2  \atop \eta \circ \xi, \ a_{m_n}=1 \right. .
\end{eqnarray}
Define $\hat t_{n}$ in an identical way to $\hat s_{n}$. Then $p \cR^n\zeta$ can be written as
$$ p \cR^n\zeta=\phi_0 \circ  \left(\zeta^{\hat s_n}, \zeta^{\hat t_n } \right).$$
\subsection{Renormalization of two-dimensional maps}
Let us fix $W$ as in Theorem~\ref{th:hyperb2}, and let $\Omega=W\times W$. We also let $\Gamma=U_R([0,1])\times U_R([0,1])$ for $R>0$.

We denote $\cO_{\Gamma,\Omega}$ the Banach space of pairs of bounded analytic functions continuous up to the boundary
$$F=(F_1:\Gamma\to\CC^2,F_2:\Omega\to\CC^2)$$ equipped with the norm
\begin{equation}
\label{eq:unormm}\| F\|= \frac{1}{2} \left(   \sup_{(x,y) \in \Gamma}|F_1(x,y)|+  \sup_{(x,y)\in \Omega} |F_2(x,y) | \right).
\end{equation}

We define a transformation $\iota$ which sends the pair $\nu=(\alpha,\beta)$ to
the  pair of functions $\iota(\nu)$:
\begin{equation}
\label{eq:embed}
\left(\left(x \atop  y\right)\mapsto \left(\alpha(x) \atop \alpha(x) \right), 
\left(x \atop  y\right)\mapsto \left(\beta(x) \atop \beta(x) \right)   \right).
\end{equation}

Next, we follow \cite{GaYa,GaRaYa} to define renormalization of small (of size $O(\delta)$ for some small $\delta>0$) two-dimensional perturbations of one-dimensional almost commuting pairs.
Namely, we consider pairs of maps  of the form
\begin{eqnarray}
\label{eq:A} A(x,y)&=&(a(x,y),h(x,y))=(a_y(x),h_y(x)),\\
\label{eq:B} B(x,y)&=&(b(x,y),g(x,y))=(b_y(x),g_y(x)), 
\end{eqnarray}
such that
\begin{itemize}
\item[1)] the pair $(A(x,y),B(x,y))$ is in a $\delta$-neighborhood  of  $\iota(\cU)$,
\item[2)]  $(h,g)$ are such that  $|\partial_x h(x,0)|>0$ and  $|\partial_x g(x,0)|>0$,  and 
$$(h(x,y)-h(x,0),g(x,y)-g(x,0))\text{ have norms  bounded by }\delta.$$ 
\end{itemize}
Suppose that $\Sigma=(A,B)$ as above is a a small perturbation of a pair $\iota(\nu)$ where $\nu=(\alpha,\beta)$.

Set
$$\h p \cR^n \Sigma = \left(F \circ \Sigma^{\h s_n} \circ A, F \circ \Sigma^{\h t_n} \circ A \right),$$
where $F=A$ if $a_n \ge 2$ and $F=B$ if $a_n=1$.

We will denote $$\pi_1(x,y)=x\text{ and }\pi_2(x,y)=y.$$

Set
$$\phi_y(x)=\phi(x,y) :=\left\{ \pi_1 A^2(x,y), \ a_n \ge 2  \atop \pi_1 A \circ B(x,y), \ a_n=1 \right.$$
Furthermore, set
$$q_z(x) \equiv q(x,z)=\pi_2 F(x,z)= \left\{ h_z(x), \ a_n \ge 2  \atop g_z(x), \ a_n=1 \right.$$
Also, set
\begin{equation}
\nonumber w_z(x) \equiv w(x,z) := q_{z}\left(\phi_{z}^{-1}( x) \right).
\end{equation} 
Notice, that $\partial_z  w_z(x)$ and $\partial_z w^{-1}_z(x)$ are functions whose uniform norms are $O(\delta)$.

\noindent
Define the following transformation:
\begin{equation}\label{eq:Htransform}
H_{\Sigma}(x,y) = (a_y(x),w_{q_0^{-1}(y)}^{-1}(y)),
\end{equation}
We have
\begin{equation}
\label{eq:AHinv} A \circ H_{\Sigma}^{-1}(x,y)=(x,h(\alpha^{-1}(x),y))+O(\delta).
\end{equation}

We use $H_{\Sigma}(x,y)$ to pull back $\hat p \cR^n \Sigma$ to a neighborhood of definition of the $n$-th pre-renormalization of a pair $(\alpha,\beta)$:
$$p \cR^n \Sigma=(\bar A, \bar B) = H_{\Sigma}\circ F \circ \left( \Sigma^{\h s_n},  \Sigma^{\h t_n} \right) \circ A \circ H_{\Sigma}^{-1}(x,y).$$
The proof of the following statement is straightforward (compare with \cite{GaYa}):
\begin{prop}
  \label{prop:acproj}For evey $n$ there exists $\delta$ small enough such that the following holds. There exists a unique triple $(d_0,d_1,d_2)\in\CC^3$ which analytically depends on the pair $\Sigma$ such that
the almost commutation condition
  \begin{equation}
    \label{aceq2D}
    \pi_1(\wtl A\circ\wtl B)(x,0)-\wtl B\circ \wtl A(x,0))=o(x^2)
  \end{equation}
  holds for
  the pair
  \begin{equation}
    \label{acproj}
    (\wtl A,\wtl B)\equiv (\bar A,\bar B)+\left(
    \left(
    \begin{array}{c}
      0\\
      0
      \end{array}
    \right),
\left(
    \begin{array}{c}
      d_0+d_1x+d_2x^2\\
      d_0+d_1x+d_2x^2
      \end{array}
    \right)
    \right)
    \end{equation}
  \end{prop}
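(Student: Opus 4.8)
The plan is to reduce the claim to a finite-dimensional implicit-function statement. First I would observe that the pair $(\bar A,\bar B)=p\cR^n\Sigma$ is already, by construction via $H_\Sigma$, a small two-dimensional perturbation of the rescaled pair $p\cR^n\nu=(\alpha,\beta_n)$ where $\nu=(\alpha,\beta)$ is one-dimensional almost commuting; in particular the first components agree on $\{y=0\}$ with a one-dimensional pair whose commutator $[\alpha,\beta_n](x)=o(x^2)$, and the two-dimensional corrections in the second coordinate are $O(\delta)$ by \eqref{eq:AHinv} and the remarks following \eqref{eq:Htransform}. Thus, writing $P(x)=P_{d_0,d_1,d_2}(x)=d_0+d_1x+d_2x^2$ and setting $\wtl B=\bar B+(0,P)^{\mathsf T}$-type correction as in \eqref{acproj}, I would expand the left-hand side of \eqref{aceq2D} in powers of $x$ at $x=0$ and extract the three lowest Taylor coefficients — the constant, linear, and quadratic terms — as functions $\Phi=(\Phi_0,\Phi_1,\Phi_2):\CC^3\times(\text{pairs }\Sigma)\to\CC^3$, $\Phi_j=\Phi_j(d_0,d_1,d_2;\Sigma)$.

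The heart of the argument is then to check that $\Phi$ is analytic in all its arguments and that $\partial\Phi/\partial(d_0,d_1,d_2)$ is invertible (uniformly, for $\Sigma$ in the relevant neighborhood and $\delta$ small). Analyticity in $(d_0,d_1,d_2)$ is clear since $P$ enters polynomially and the commutator is a finite composition of analytic maps; analyticity in $\Sigma$ follows because each of the constituent operations ($\Sigma\mapsto\bar A,\bar B$ via the explicit formulas for $\h p\cR^n$, $H_\Sigma$, $q_0^{-1}$, $\phi_z^{-1}$, etc.) is analytic on the domain where it is defined — here one invokes that composition, inversion near a point with nonvanishing derivative, and the implicit solutions used to build $H_\Sigma$ all depend analytically on parameters, exactly as in \cite{GaYa,GaRaYa}. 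For the derivative in $d$: adding $(0,P)^{\mathsf T}$ to the second component of $\bar B$ modifies $\wtl B$ in its $\pi_1$-component at second order in $\delta$ and at first order directly through $g$; a direct computation of $\pi_1(\wtl A\circ\wtl B)(x,0)-\pi_1(\wtl B\circ\wtl A)(x,0)$ shows that to leading order its $j$-th Taylor coefficient picks up $c_j\,d_j$ plus lower-order-in-$\delta$ cross terms, where $c_j\neq 0$ because $\partial_x h(x,0)\neq 0$ and $\partial_x g(x,0)\neq 0$ by hypothesis (2) on the class of pairs. Hence the Jacobian in $d$ is a triangular-plus-$O(\delta)$ matrix with nonzero diagonal, so it is invertible for $\delta$ small, and the standard analytic implicit function theorem on $\CC^3$ yields a unique analytic solution $(d_0,d_1,d_2)=(d_0,d_1,d_2)(\Sigma)$.

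I would close by noting uniqueness: any two solutions of \eqref{aceq2D} differ by a polynomial correction $(0,P_{\delta d_0,\delta d_1,\delta d_2})^{\mathsf T}$ whose three lowest Taylor coefficients vanish, and by the invertibility just established this forces $\delta d_0=\delta d_1=\delta d_2=0$; so the triple is unique, not merely locally unique.

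The main obstacle I anticipate is bookkeeping the $\delta$-dependence cleanly: one must be sure that the "error" cross terms coupling $d_1,d_2$ into the lower Taylor coefficients $\Phi_0,\Phi_1$ (which arise because $\wtl A$ and $\wtl B$ are genuinely two-dimensional, so $\pi_1$ of a composition sees the second coordinate) are all $O(\delta)$ and do not degrade the invertibility of the $d$-Jacobian. This is precisely where hypothesis (2) — the two-dimensional parts of $(h,g)$ having norm $\le\delta$, together with $|\partial_x h(x,0)|>0$, $|\partial_x g(x,0)|>0$ — is used, and where the estimate \eqref{eq:AHinv} and the $O(\delta)$ bound on $\partial_z w_z$, $\partial_z w_z^{-1}$ enter; everything else is the routine verification, essentially identical to \cite{GaYa}, that the finitely many operations defining $p\cR^n\Sigma$ are analytic in $\Sigma$.
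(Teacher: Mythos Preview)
Your overall strategy --- extract the three lowest Taylor coefficients of the $\pi_1$-commutator as a map $\Phi:\CC^3\times\{\Sigma\}\to\CC^3$, verify analyticity, and apply the analytic implicit function theorem --- is exactly what the paper intends: the proposition is stated without proof beyond ``straightforward (compare with \cite{GaYa})'', and this is the argument used there.

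There is, however, a concrete misreading that breaks your Jacobian computation. In \eqref{acproj} the polynomial $P(x)=d_0+d_1x+d_2x^2$ is added to \emph{both} coordinates of $\bar B$, i.e.\ $\wtl B=\bar B+(P,P)^{\mathsf T}$, not to the second coordinate only. With your $(0,P)^{\mathsf T}$ reading, $\pi_1\wtl B=\pi_1\bar B$ is unchanged, so the entire $(d_0,d_1,d_2)$-dependence of $\pi_1(\wtl A\circ\wtl B-\wtl B\circ\wtl A)(x,0)$ enters only through $\partial_y\bar a$ acting on the perturbed second coordinate --- and $\partial_y\bar a=O(\delta)$ by construction. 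The Jacobian $\partial\Phi/\partial d$ would then itself be $O(\delta)$, directly contradicting your ``triangular-plus-$O(\delta)$ with nonzero diagonal'' claim; the hypotheses $\partial_x h(x,0)\neq 0$, $\partial_x g(x,0)\neq 0$ you invoke are $x$-derivatives and give no control over $\partial_y\bar a$. With the correct $(P,P)^{\mathsf T}$, the polynomial $P$ appears in $\pi_1\wtl B$ at zeroth order in $\delta$, and the linearization in $d$ becomes
\[
\delta P\ \longmapsto\ (\partial_1\bar a)\bigl(\bar B(x,0)\bigr)\,\delta P(x)\;-\;\delta P\bigl(\bar a(x,0)\bigr)\;+\;O(\delta).
\]
This is not ``diagonal with entries $c_j\neq 0$'': $d_0$ and $d_1$ mix into the constant term, $d_1$ and $d_2$ into the linear term, etc., and when $\bar a(\,\cdot\,,0)$ is close to a translation the leading-order $3\times 3$ matrix is visibly singular (constants lie in the kernel). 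So the invertibility check here is a genuine computation --- carried out in \cite{GaYa} in the critical-circle setting --- rather than an immediate triangularity observation; you should redo it with the correct form of the correction before invoking the implicit function theorem.
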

Note that:
\begin{prop}
If $A$ and $B$ commute then $$(\wtl A,\wtl B)=(\bar A,\bar B).$$
\end{prop}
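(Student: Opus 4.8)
The claim is that if $A$ and $B$ genuinely commute (not merely almost commute), then the correction triple $(d_0,d_1,d_2)$ produced by \propref{prop:acproj} vanishes, so that $(\wtl A,\wtl B)=(\bar A,\bar B)$. The plan is to exploit the uniqueness asserted in \propref{prop:acproj}: if we can exhibit \emph{some} triple for which the almost commutation condition \eqref{aceq2D} holds, then by uniqueness it must be the one chosen, and we will show $(0,0,0)$ works.

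\medskip

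\noindent\textbf{Step 1: Commutation is preserved by pre-renormalization.} First I would observe that if $(A,B)$ commute, then the composition structure of $p\cR^n\Sigma$ built from $\Sigma^{\h s_n}$, $\Sigma^{\h t_n}$, $F$, $A$, and the conjugating homeomorphism $H_\Sigma$ produces a pair $(\bar A,\bar B)$ that again commutes. This is the two-dimensional analogue of \lemref{lem-comm}: the pre-renormalized maps $\bar A$ and $\bar B$ are expressed as $H_\Sigma\circ(\text{word in }A,B)\circ H_\Sigma^{-1}$, and since $A$ and $B$ commute all these words commute with one another; conjugation by the common $H_\Sigma$ preserves this. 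Hence $\bar A\circ\bar B=\bar B\circ\bar A$ exactly, and in particular
$$\pi_1(\bar A\circ\bar B)(x,0)-\pi_1(\bar B\circ\bar A)(x,0)\equiv 0.$$

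\medskip

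\noindent\textbf{Step 2: The zero correction satisfies the almost commutation condition.} With $(d_0,d_1,d_2)=(0,0,0)$ the formula \eqref{acproj} gives $(\wtl A,\wtl B)=(\bar A,\bar B)$, and by Step 1 the left-hand side of \eqref{aceq2D} is identically zero, hence certainly $o(x^2)$. So $(0,0,0)$ is an admissible correction triple.

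\medskip

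\noindent\textbf{Step 3: Invoke uniqueness.} \propref{prop:acproj} asserts the correction triple is \emph{unique}. Since $(0,0,0)$ works by Step 2, it must be the triple selected by the proposition, and therefore $(\wtl A,\wtl B)=(\bar A,\bar B)$, which is the desired conclusion. \QED

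\medskip

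\noindent The only real content is Step 1 — verifying that the specific renormalization recipe, including the conjugation by $H_\Sigma$ and the insertion of the outer $A$ and $F$, respects commutation when the starting pair commutes. This is essentially bookkeeping with the multi-index words $\h s_n$, $\h t_n$ and the relations of \lemref{multi1}, and is parallel to the one-dimensional computation already given after \lemref{lem-comm}; I do not expect any genuine obstacle, only the need to track that $H_\Sigma$ is the same on both coordinates of the pair so that conjugation does not break the algebraic identity.
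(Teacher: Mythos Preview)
Your argument is correct and is exactly the intended one: the paper states this proposition without proof, treating it as immediate, and your three steps (commutation persists under the word-and-conjugation structure of $p\cR^n$, hence the zero triple satisfies \eqref{aceq2D}, hence by uniqueness it is the chosen triple) are precisely the justification one would supply. The only point worth making explicit in Step~1 is that both $\bar A$ and $\bar B$ are $H_\Sigma$-conjugates of \emph{words} in $A,B$ (namely $F\circ\Sigma^{\hat s_n}\circ A$ and $F\circ\Sigma^{\hat t_n}\circ A$), and words in two commuting maps commute; you say this, so there is no gap.
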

Set
$$\wtl \alpha\equiv \pi_1\wtl A(x,0),$$
and let $\psi(x)$ be a solution of the functional equation $$\psi^{-1}\circ\wtl\alpha\circ\psi(x)=T(x)$$
chosen to analytically depend on $\wtl\alpha$, satisfying $\psi(0)=0$. Set
$$ \Psi(x,y)=(\psi(x),y)$$
and
$$\hat A\equiv  \Psi^{-1}\wtl A(\Psi(x,y)),\; \hat B\equiv \Psi^{-1}\wtl B(\Psi(x,y))$$
In the same way as in \cite{GaYa}, we have 
\begin{thm}
  \label{th:sizeperturb}For every $n,M\in\NN$, 
  there exist constants  $\delta_0>0$, $C>0$, a topological disk $\hat W\supset W$,  and a neighborhood $\hat \cU\in\cO_{\Gamma,\Omega}$ of $\iota\{(T,T_\theta)\;|\;\theta\in\cB\}$ such that the following holds. Set $\hat\Omega=\hat W\times \hat W$.  Assume $\delta<\delta_0$, and $\Sigma=(A,B)\in \hat \cU$. Then
  $(\hat A,\hat B)$ lies in $\cO_{\Gamma,\hat\Omega}$, and
  $$\dist((\hat A,\hat B), \iota(\cAC_W))<C\delta^2.$$
  
\end{thm}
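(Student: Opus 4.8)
The plan is to reduce the statement to the purely one-dimensional estimates already proved, by carefully tracking how far each of the transformations $H_\Sigma$, the almost-commutation projection, and the conjugation by $\Psi$ pushes the pair $\Sigma$ away from the embedded image $\iota(\cAC_W)$. The key observation is that when $\Sigma=\iota(\nu)$ with $\nu$ a genuine one-dimensional commuting pair, every step of the 2D renormalization construction is compatible with the embedding: $H_\Sigma$ reduces to (the lift of) a one-dimensional coordinate change, the correction triple $(d_0,d_1,d_2)$ vanishes (by the second Proposition above), $\Psi$ reduces to the one-dimensional normalizing change $\psi$, and $(\hat A,\hat B)=\iota(\cR\nu)$ up to the $n$-fold iteration bookkeeping encoded by $\hat s_n,\hat t_n$. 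So on the "diagonal" $\iota(\cC_W)$ the composite map lands exactly in $\iota(\cAC_W)$, and the content of the theorem is that the error is \emph{quadratic} in the size $\delta$ of the transversal perturbation.

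The first step is to set up coordinates measuring the deviation from $\iota(\cC_W)$: write $A(x,y)=(a_y(x),h_y(x))$, $B(x,y)=(b_y(x),g_y(x))$ and decompose each map into its value at $y=0$ (which is, up to the $O(\delta)$ neighborhood condition, the one-dimensional pair) plus the $y$-dependent remainder, which by hypothesis (2) has norm $O(\delta)$. The crucial structural fact, already recorded in the excerpt, is that $\partial_z w_z(x)$ and $\partial_z w_z^{-1}(x)$ are $O(\delta)$, and that $A\circ H_\Sigma^{-1}(x,y)=(x,h(\alpha^{-1}(x),y))+O(\delta)$. I would use these to show that after applying $H_\Sigma$ the first coordinate of $p\cR^n\Sigma$ becomes, up to $O(\delta^2)$, independent of $y$, and the second coordinate is "straightened" so that its $y$-dependence is linear-plus-$O(\delta^2)$. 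The point is that $H_\Sigma$ was \emph{designed} to kill the first-order-in-$\delta$ part of the transversal dependence; what it cannot remove is genuinely second order. This is where the bulk of the bookkeeping lives: one must verify that composing the $n$ copies of $A$ and $B$ that make up $\Sigma^{\hat s_n}$, $\Sigma^{\hat t_n}$ does not amplify the $O(\delta)$ errors beyond a constant factor $C=C(n,M)$ depending on $n$ and the bound $M$ — this uses that $n$ is fixed and that on $\hat\cU$ the relevant derivatives are uniformly bounded (uniformly in $M$ by Theorem~\ref{th:hyperb1}).

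Next, having shown $(\bar A,\bar B)$ is within $O(\delta)$ of $\iota$ of a one-dimensional almost commuting pair with the transversal dependence affine up to $O(\delta^2)$, I would invoke Proposition~\ref{prop:acproj}: the correction polynomial $d_0+d_1x+d_2x^2$ is determined by the commutator defect of $(\bar A,\bar B)$ at $y=0$, and since the first Proposition guarantees it vanishes on the diagonal, analyticity of $(d_0,d_1,d_2)$ in $\Sigma$ together with the fact that the distance from $\Sigma$ to the diagonal $\iota(\cC_W)$ is $O(\delta)$ gives $|(d_0,d_1,d_2)|=O(\delta)$ — but in fact I expect one can do better using that the one-dimensional part is itself only almost-commuting with an $o(x^2)$ defect, so that the true size of the correction needed is governed by the \emph{quadratic} interaction, giving $O(\delta^2)$; this is the delicate point and I will return to it. Finally the conjugation by $\Psi=(\psi(x),y)$ is a one-dimensional operation in the $x$-variable depending analytically on $\wtl\alpha=\pi_1\wtl A(x,0)$, which is $O(\delta)$-close (indeed, after the $H_\Sigma$ straightening, the relevant comparison is to the one-dimensional renormalized $\alpha$), so it moves the pair by at most a bounded multiple of its existing distance to $\iota(\cAC_W)$; enlarging $W$ to $\hat W$ is needed precisely because $\psi$ and the auxiliary maps are only defined and controlled on a slightly smaller domain, so one loses a definite amount of domain but gains it back by working on $\hat W\supset W$ from the start, exactly as in \cite{GaYa}.

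The main obstacle is the \emph{quadratic} gain, i.e. showing the distance is $O(\delta^2)$ rather than merely $O(\delta)$. Getting $O(\delta)$ is automatic from the neighborhood hypotheses; the square requires a genuine cancellation argument. The mechanism, as in \cite{GaYa} and \cite{GaRaYa}, is that the 2D renormalization construction — the choice of $H_\Sigma$ via $w_{q_0^{-1}(y)}^{-1}$ and the almost-commutation projection — is engineered so that the \emph{linear-in-$\delta$} part of $(\hat A,\hat B) - \iota(\text{1D renorm})$ is identically zero: it is precisely the first variation along the transversal direction that these transformations annihilate. Concretely, one Taylor-expands each composition in the transversal parameter, checks that the order-$\delta$ term of $H_\Sigma\circ F\circ(\Sigma^{\hat s_n},\Sigma^{\hat t_n})\circ A\circ H_\Sigma^{-1}$ is a total "coboundary" that the projection in Proposition~\ref{prop:acproj} removes, leaving the order-$\delta^2$ term as the leading error. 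I would organize this as a lemma isolating the first-variation computation, then combine with the uniform bounds on $\hat\cU$ (Cauchy estimates on the fixed domains $\Gamma,\Omega,\hat\Omega$) to convert "leading term is second order" into the quantitative bound $C\delta^2$ with $C=C(n,M)$; the uniformity in $M$ of the background hyperbolicity (Theorems~\ref{th:hyperb1}, \ref{th:hyperb2}) is what keeps $C$ from blowing up as the bounded type varies, though $C$ and $\delta_0$ are allowed to depend on the fixed number of renormalization steps $n$.
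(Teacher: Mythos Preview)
Your proposal is correct and follows exactly the approach of \cite{GaYa}, which is all the paper itself does here---it supplies no proof beyond the clause ``In the same way as in \cite{GaYa}, we have''. The one place where you hedge (the size of the correction $(d_0,d_1,d_2)$) is not actually the delicate point: once you have established that $(\bar A,\bar B)$ is $O(\delta^2)$-close to $\iota$ of its own one-dimensional shadow $(\pi_1\bar A(\cdot,0),\pi_1\bar B(\cdot,0))$, the projection of Proposition~\ref{prop:acproj} only moves that shadow \emph{within} the one-dimensional space to enforce almost-commutation, so the transversal distance to $\iota(\cAC_W)$ stays $O(\delta^2)$ regardless of how large the $d_i$ are.
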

Let us fix the quantifiers as in Theorem~\ref{th:sizeperturb} and define the {\it renormalization operator of level} $n$ as
$$\cR_n:(A,B)\mapsto (\hat A,\hat B).$$
Theorems \ref{th:hyperb2} and \ref{th:sizeperturb} imply the following:
\begin{thm}
  \label{th:hyperb3}
  The operator $\cR_n$ is a compact analytic operator of an open neighborhood $\hat \cU\in\cO_{\Gamma,\Omega}$ of $\iota\{(T,T_\theta)\;|\;\theta\in\bT_M\}$ to
  $\cO_{\Gamma,\Omega}$. Its differential is a compact linear operator. The invariant set $\iota\{(T,T_\theta)\;|\;\theta\in\bT_M\}$ is uniformly hyperbolic, with one dimensional unstable direction. It has a codimension one strong stable foliation by analytic submanifolds.
 \end{thm}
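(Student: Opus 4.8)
The plan is to deduce Theorem~\ref{th:hyperb3} from the one-dimensional picture (Theorem~\ref{th:hyperb2}) by a graph-transform / implicit-function argument, treating $\cR_n$ as a skew product over the $1$D renormalization $\cR$ acting on $\cAC_W$, with the two-dimensional ``fiber'' directions being strongly contracted. First I would record the structure established in Theorem~\ref{th:sizeperturb}: the operator $\cR_n$ maps $\hat\cU$ compactly into $\cO_{\Gamma,\Omega}$, its image lies within $C\delta^2$ of $\iota(\cAC_W)$, and the embedding $\iota$ identifies $\cAC_W$ with a closed analytic submanifold of $\cO_{\Gamma,\Omega}$ which is invariant and on which $\cR_n$ restricts to the $1$D operator $\cR$ of Theorem~\ref{th:hyperb2}. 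Compactness of $\cR_n$ and of its differential is essentially inherited from the $1$D statements together with the fact that the extra coordinate functions $h,g$ and the correction maps $H_\Sigma$, $\Psi$ depend analytically and compactly on the data (the relevant operators factor through restriction to a smaller polydisk, which is compact); I would cite \cite{GaYa,GaRaYa} for the analogous arguments and note the estimates $\partial_z w_z=O(\delta)$, $\partial_z w_z^{-1}=O(\delta)$ already recorded in the text.

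Next I would analyze the differential $D\cR_n$ at a fixed point $\iota(T_\theta)$ with $\theta\in\bT_M$. Because $\iota(\cAC_W)$ is invariant, $D\cR_n$ is block triangular in the splitting $T\cO_{\Gamma,\Omega}=T\iota(\cAC_W)\oplus N$, where $N$ is a complement consisting of genuinely two-dimensional deformations (those not tangent to the $1$D manifold). On the first block $D\cR_n$ acts as $D\cR$, which by Theorem~\ref{th:hyperb2} is uniformly hyperbolic on $\{T_\theta : \theta\in\bT_M\}$ with a one-dimensional unstable direction and a codimension-one strong stable subspace. The key new point is that the ``normal'' block is a contraction: this should follow from Theorem~\ref{th:sizeperturb} itself, since a normal deformation of size $\eps$ produces, after one application of $\cR_n$, a pair within $O(\eps^2)+O(\delta\eps)$ of $\iota(\cAC_W)$ — so the induced map on $N$ has norm $O(\delta)$, hence is a strong contraction once $\delta<\delta_0$ is small. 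One should be a little careful that this quadratic/small-factor estimate is uniform in $\theta\in\bT_M$ and in the base point within $\hat\cU$; this uniformity is exactly what the quantifier structure of Theorems~\ref{th:commutator1} and~\ref{th:sizeperturb} (``$\eps$ depending only on $M$'') was set up to provide.

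With the spectral picture of $D\cR_n$ in hand — one expanding eigenvalue coming from the $1$D unstable direction, and all other spectrum inside a disk of radius strictly less than one (the $1$D strong stable spectrum together with the $O(\delta)$ normal spectrum), with a uniform gap over $\bT_M$ — I would invoke the standard Hirsch–Pugh–Shub / graph-transform machinery for a compact hyperbolic invariant set of a $C^1$ (here analytic) map on a Banach manifold: existence of local unstable manifolds (one-dimensional), existence of a codimension-one strong stable foliation by analytic submanifolds, and uniform hyperbolicity. Analyticity of the invariant manifolds follows because $\cR_n$ is analytic and the contraction-on-fibers estimates are uniform, so the graph transform is an analytic contraction on a space of analytic sections (as in \cite{GoYa,GaYa}). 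The one structural input worth spelling out is that the strong stable leaf through $\iota(T_\theta)$ contains, and near $\iota(T_\theta)$ essentially fibers over, the $1$D strong stable leaf $W^s(T_\theta)\cap\cAC_W$: this is what lets us transport the conjugacy statement (Theorem~\ref{th:rotation}) later.

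\medskip
I expect the main obstacle to be establishing, with the required \emph{uniformity over} $\bT_M$ \emph{and over the base point}, that the normal block of $D\cR_n$ is an honest contraction — equivalently, that the $C\delta^2$ estimate of Theorem~\ref{th:sizeperturb} differentiates to a bound $\|D\cR_n|_N\|=O(\delta)$ rather than merely a pointwise quadratic tangency at the fixed point. Near a fixed point the quadratic estimate already forces $D\cR_n|_N$ to vanish \emph{there}, but hyperbolicity of the whole invariant set requires the contraction on a full neighborhood of $\{\,\iota(T_\theta):\theta\in\bT_M\,\}$, and one must check that the implicit constants in Theorem~\ref{th:sizeperturb} (the disk $\hat W$, the neighborhood $\hat\cU$, the constant $C$) can be taken independent of $\theta\in\bT_M$ — which is plausible given the uniform hyperbolicity in Theorem~\ref{th:hyperb2} and the $M$-dependence-only clause in Theorem~\ref{th:commutator1}, but is the place where the ``connecting the dots'' has to be done carefully. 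Everything else (compactness, block-triangular structure, the HPS conclusion) is routine given the cited results.
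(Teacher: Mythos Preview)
Your proposal is correct and is exactly the argument the paper has in mind: the paper gives no explicit proof of Theorem~\ref{th:hyperb3} beyond the single line ``Theorems~\ref{th:hyperb2} and~\ref{th:sizeperturb} imply the following,'' and your block-triangular/graph-transform reading of that implication (the $1$D hyperbolicity of Theorem~\ref{th:hyperb2} on the invariant submanifold $\iota(\cAC_W)$, plus the $C\delta^2$ estimate of Theorem~\ref{th:sizeperturb} forcing strong contraction in the normal directions) is precisely how the two theorems combine. Your stated ``main obstacle'' --- uniformity of the normal contraction over $\bT_M$ and over $\hat\cU$ --- is already built into the quantifiers of Theorem~\ref{th:sizeperturb} (the constants $\delta_0$, $C$, $\hat W$, $\hat\cU$ depend only on $n$ and $M$), so there is no genuine gap.
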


\section{Rotation domains}
\label{sec:rotation}
For $r>0$ set $$\AA_r\equiv \{|\Im z|<r\}/\ZZ.$$
We say that a dissipative H\'enon-like map $H$ has a {\it rotation domain} if there exists an $H$-invariant domain $C$ and an analytic isomorphism
$$\Phi:C\to \AA_r\times \DD\text{ for some }r>0,$$
which conjugates $H$ to the linear map
$$L(x,y)=(e^{2\pi i\theta}x,sy)\text{ with }\theta\in\RR\setminus \QQ.$$

Furthermore, let us say that a pair $(A,B)\in\cO_\Omega$ is a renormalization of a map $H$ if there exists $m\in\NN$ and a linear map $\Lambda$ such that
$$(A,B)=\Lambda\circ(H^{q_m},H^{q_{m+1}})\circ \Lambda^{-1}.$$
Finally, for a Brjuno number $\theta\in(0,1)$ let us denote
$$Y_m(\theta)=\sum_{j=0}^m\theta_{-1}\theta_0\theta_1\cdots\theta_{j-1}\log\frac{1}{\theta_j},\text{ where }\theta_{-1}=1,\;\theta_0=\theta\text{, and }\theta_{j+1}=\{1/\theta_j\}.$$
Note that $Y_\infty(\theta)<\infty$ is the Brjuno-Yoccoz function of $\theta\in\cB$.

\begin{thm}
  \label{th:rotation}
  Suppose a renormalization of $H$ lies in the strong stable foliation of
  $$\iota\{(T,T_\theta)\;|\;\theta\in\bT_M\}$$
constructed in Theorem~\ref{th:hyperb3}.
  Then $H$ possesses a rotation domain.
\end{thm}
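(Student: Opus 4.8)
The plan is to upgrade the exponential convergence of the renormalizations of $H$ to the rigid orbit, guaranteed by \thmref{th:hyperb3}, into an analytic linearization of $H$ near an invariant circle.

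\emph{Step 1: unwinding the renormalization.} Let $\Sigma_0=(A_0,B_0)=\Lambda\circ(H^{q_m},H^{q_{m+1}})\circ\Lambda^{-1}$ be the renormalization of $H$ that lies on the strong stable leaf $W^s(\iota(T,T_\theta))$, $\theta\in\bT_M$. Since $A_0$ and $B_0$ are, up to the linear change $\Lambda$, iterates of the single map $H$, they commute; hence by the Proposition following \propref{prop:acproj} the almost--commuting correction $(d_0,d_1,d_2)$ vanishes at each step of renormalization. As the composition step, the conjugacy by $H_{\Sigma}$ and the conjugacy by $\Psi$ all send analytic conjugates of iterates of $H$ to analytic conjugates of iterates of $H$ (with a common conjugating map for the two components), an induction gives
\[ \Sigma_k:=\cR_n^k\Sigma_0=\Phi_k\circ\bigl(H^{q_{m_k}},H^{q_{m_k+1}}\bigr)\circ\Phi_k^{-1}, \]
where $\Phi_k$ is the explicit analytic diffeomorphism onto its image obtained by composing the linear rescalings and the maps $H_{\Sigma}$, $\Psi$ produced in the first $k$ renormalization steps, and $(m_k)$ is the sequence of closest--return times of $H$ dictated by the continued fraction expansion and the level $n$. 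Uniform hyperbolicity in \thmref{th:hyperb3} then yields $\|\Sigma_k-\iota(T,T_{\theta_k})\|\le C\lambda^k$ for some $\lambda\in(0,1)$, with $\theta_k\in\bT_M$ the carried rotation number; precompactness of $\cR_n$, together with the bounds of \thmref{th:sizeperturb}, keeps the $\Sigma_k$ analytic and nondegenerate on the fixed polydisk $\Omega$ and controls the geometry of the domains of the $\Phi_k$ uniformly in $k$.

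\emph{Step 2: linearization on the model side.} For $k$ large, $\Sigma_k$ is an exponentially small perturbation of $\iota(T,T_{\theta_k})$ with contracting transverse (``height'') component, in the sense of conditions 1)--2) of \S3.2. In the circle direction the analytic conjugacy furnished by \thmref{th:hyperb1} --- applied to the limiting one--dimensional pair and carried along the perturbation --- gives an analytic invariant circle on which the dynamics is the rotation $T_{\theta_k}$; transversally, the multiplier $s$ satisfies $|s|<1$ by dissipativity, so there are no resonances $e^{2\pi i j\theta_k}=s^\ell$ with $(j,\ell)\ne(0,0)$, and the bounded--type hypothesis supplies the Diophantine control of small divisors needed for an analytic (Sternberg--type, KAM) normal form of the transverse contraction. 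This produces an analytic conjugacy $\Xi_k$ between $\Sigma_k$ on a sub--polydisk of definite size and the linear pair $\bigl((x,y)\mapsto(x+1,sy),\ (x,y)\mapsto(x+\theta_k,sy)\bigr)$, with uniform control and, letting $k\to\infty$, a well--defined limit.

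\emph{Step 3: assembling the rotation domain, and the main obstacle.} By Step 1, $\Phi_k^{-1}\circ\Xi_k^{-1}$ transports a fundamental piece of the linear model to a piece of the dynamics of $H$ near the closest--return circle at level $k$; spreading this piece around by the $H$--dynamics (equivalently, by the rigid rotation in the model) fills a full neighborhood of an invariant circle $\gamma$ of $H$, and the uniform geometric bounds of Step 1 force these neighborhoods to stabilize, as $k\to\infty$, to a fixed $H$--invariant domain $C\supset\gamma$. Since the linearization of the linear model is unique up to its finite--dimensional linear centralizer, the transported coordinates agree, after normalization, on overlapping pieces and patch, by $H$--equivariance, to an analytic isomorphism $\Phi:C\to\AA_r\times\DD$ with $\Phi\circ L=H\circ\Phi$ and $L(x,y)=(e^{2\pi i\vartheta}x,sy)$, $\vartheta$ the irrational rotation number of $\gamma$; univalence of $\Phi$ follows from the a priori bounds. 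This is the asserted rotation domain. The crux of the argument is Step 2: converting $O(\lambda^k)$ proximity to the rigid pair into an honest analytic linearization of the \emph{two}--dimensional return pair on a sub--polydisk of definite size, uniformly in $k$ --- the cylinder direction is governed by \thmref{th:hyperb1}, but one must graft onto it the transverse linearization, where the bounded--type hypothesis is indispensable, and, above all, verify that the linearizing domains do not shrink along the renormalization tower, so that the transported pieces genuinely tile a fixed neighborhood of the limiting invariant circle rather than collapsing onto it.
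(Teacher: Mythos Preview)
Your strategy differs substantially from the paper's, and Step~2 contains a genuine gap that you yourself flag but do not close. The paper does \emph{not} attempt a two-dimensional Sternberg/KAM linearization of $\Sigma_k$ at all. Instead it introduces a \emph{shadowing} one-dimensional orbit $(T,g_k)$ with $\dist(\iota(g_k),B_k)\lesssim\delta^{2^k}$ (this is the super-exponential contraction of \thmref{th:sizeperturb}, not the mere $\lambda^k$ you invoke), takes the one-dimensional linearizers $h_k$ of the $g_k$, lifts $(h_k(x),h_k(y))$ through the renormalization microscope of \cite{GaRaYa}, and controls the size of the lifted domains via the Brjuno--Yoccoz partial sums $Y_{kn}(\theta)$. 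The limit $H_\infty$ of these lifts is the desired conjugacy. Thus the paper reduces everything to the one-dimensional theory; no separate transverse normal form is needed, and the bounded-type hypothesis enters only through the domain estimate $\Delta-C-Y_{kn}(\theta)$ remaining positive.

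Your Step~2 is problematic beyond being incomplete. The renormalization limit $\iota(T,T_{\theta_k})$ is the \emph{degenerate} pair $\bigl((x,y)\mapsto(x+1,x+1),\ (x,y)\mapsto(x+\theta_k,x+\theta_k)\bigr)$, not the hyperbolic model $\bigl((x,y)\mapsto(x+1,sy),\ (x,y)\mapsto(x+\theta_k,sy)\bigr)$ you write down; the transverse ``multiplier'' of $\Sigma_k$ is on the order of the Jacobian of $H^{q_{m_k}}$ and tends to~$0$, so there is no fixed $s$ for which a uniform Sternberg argument applies, and your resonance discussion (which refers to the multiplier of $H$, not of $\Sigma_k$) is beside the point. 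Consequently the sequence of target linear models in Step~2 does not stabilize, the uniformity of $\Xi_k$ is unjustified, and the patching in Step~3 has nothing firm to glue. The paper sidesteps all of this by never linearizing the two-dimensional pair directly: the $\delta^{2^k}$ proximity to the one-dimensional embedding is strong enough that the diagonal lift of the one-dimensional linearizer already does the job.
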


\begin{proof}
  Denote $(A,B)$ the renormalization of $H$ in the strong stable manifold of $\iota(T,T_\theta)$ with $\theta\in\cB$, and
  let $(A_k,B_k)=(p\cR^n)^k(A,B)$. Consider a shadowing $p\cR_n$-orbit of a normalized almost-commuting pair $(T,g)$ (note that this is a pre-renormalization orbit, which thus consists of non-normalized pairs),
  $$p\cR^n:(T,g_k)\mapsto (T,g_{k+1}),\; g_0=g$$
  so that
  $$\dist(G_k,B_k)\lesssim\delta^{2^k}\text{ where }G_k=\iota(g_k).$$
Let $h_k$ be the corresponding linearizing map
  $$h_k\circ g_k\circ h_k^{-1}(z)=1.$$
  Let $H_k$ be the lift of $(h_k(x),h_k(y))$ to the domain of definition of $(A,B)$ via the renormalization microscope (see \cite{GaRaYa} for the details of the construction).
Replacing $(A,B)$ with $\cR_n^l(A,B)$ for a sufficiently large $l$ if needed, we can guarantee that
$H_k$ is defined in a polydisk, whose $x$-projection is a rectangle of height
$$|\Im z|>\Delta -C-Y_{kn}(\theta),$$
where $C$ is a constant independent of $k$ and $\Delta$ is large compared to $C+Y_\infty(\theta)$.
Passing to a limit $H_\infty$ completes the proof.
  
  \end{proof}

\bibliographystyle{plain}
\bibliography{biblio}

\begin{thebibliography}{1}

\bibitem{GaRaYa}
D.~Gaidashev, R.~Radu, and M.~Yampolsky.
\newblock Renormalization and {S}iegel disks for complex {H}\'enon maps.
\newblock {\em {J}{E}{M}{S}}, 23, 2020.

\bibitem{GaiYam}
D.~Gaidashev and M.~Yampolsky.
\newblock Cylinder renormalization of {S}iegel disks.
\newblock {\em Experiment. {M}ath.}, 16(2):215--226, 2007.

\bibitem{GaYa}
D.~Gaidashev and M.~Yampolsky.
\newblock Renormalization of almost commuting pairs.
\newblock {\em Inventiones math.}, 221:203--236, 2020.

\bibitem{GaYa1}
D.~Gaidashev and M.~Yampolsky.
\newblock Golden mean {S}iegel disk universality and renormalization.
\newblock {\em Moscow {M}ath. J (to appear)}, 2021.

\bibitem{GoYa}
N.~Goncharuk and M.~Yampolsky.
\newblock Analytic linearization of conformal maps of the annulus.
\newblock 2020.
\newblock e-print, ar{X}iv:2004.05126.

\bibitem{Ya3}
M~Yampolsky.
\newblock Hyperbolicity of renormalization of critical circle maps.
\newblock {\em Publ. Math. Inst. Hautes {\'E}tudes Sci.}, 96:1--41, 2002.

\end{thebibliography}
\end{document}